\documentclass[a4paper, 11pt]{amsart}

\usepackage[dvips]{graphicx}
\usepackage{amsmath,amsthm,amssymb,pinlabel}
\usepackage[all]{xy}
\usepackage[dvipsnames]{xcolor} \SelectTips{cm}{}

\allowdisplaybreaks

\usepackage{hyperref}
\hypersetup{colorlinks=true,citecolor=brown,linkcolor=brown,urlcolor=black,filecolor=black}

\theoremstyle{definition}
\newtheorem{dfn}{Definition}[section]
\newtheorem{rem}[dfn]{Remark}
\newtheorem{example}[dfn]{Example}

\theoremstyle{plain}
\newtheorem{thm}[dfn]{Theorem}
\newtheorem{prop}[dfn]{Proposition}
\newtheorem{lem}[dfn]{Lemma}
\newtheorem{cor}[dfn]{Corollary}

\makeatletter
    
    \@addtoreset{equation}{section}
  \makeatother

\begin{document}

\title{A note on Penner's cocycle on~the~fatgraph~complex}

\author{Yusuke Kuno}
\address{Department of Mathematics, Tsuda University,
2-1-1 Tsuda-machi, Kodaira-shi Tokyo 187-8577, Japan}
\email{kunotti@tsuda.ac.jp}

\author{Kae Takezawa}
\address{Department of Mathematics, Tsuda University,
2-1-1 Tsuda-machi, Kodaira-shi Tokyo 187-8577, Japan}
\email{m18ktake@gm.tsuda.ac.jp}

\date{}

\subjclass[2010]{}
\keywords{}
\thanks{The first named author is supported by JSPS KAKENHI (no. 18K03308).}

\begin{abstract}
We study a $1$-cocycle on the fatgraph complex of a punctured surface introduced by Penner.
We present an explicit cobounding cochain for this cocycle, whose formula involves a summation over trivalent vertices of a trivalent fatgraph spine.
In a similar fashion, we express the symplectic form of the underlying surface of a given fatgraph spine.
\end{abstract}

\maketitle

\section{Introduction}

The fatgraph (or ribbon graph) complex $\widehat{\mathcal{G}} = \widehat{\mathcal{G}}(\Sigma)$ of a punctured oriented surface $\Sigma$ serves as a combinatorial model for the Teichm\"{u}ller space of $\Sigma$ and the action of the mapping class group $\mathcal{M} = \mathcal{M}(\Sigma)$ on it.
The cells of $\widehat{\mathcal{G}}$ are indexed by isotopy classes of fatgraph spines, which are graphs embedded in the surface satisfying certain conditions, and its face relations are described by contracting non-loop edges in fatgraph spines.

The complex $\widehat{\mathcal{G}}$ has been used to study the cohomology of the mapping class group and the moduli space of Riemann surfaces.
Among others, Morita and Penner \cite{MP} studied twisted first cohomology of the mapping class group by using $\widehat{\mathcal{G}}$.
More concretely, they considered a once punctured (or bordered) surface and ``lifted'' the extended first Johnson homomorphism
\[
\tilde{k} \in H^1(\mathcal{M}; \textstyle{\frac{1}{2}} \wedge^3 H)
\]
to a $1$-cocycle on $\widehat{\mathcal{G}}$.
Here, $H$ is the first integral homology group of $\Sigma$ and $\wedge^3 H$ is its third exterior power.
The key feature of \cite{MP} is that their cocycle
\[
j \in Z^1(\widehat{\mathcal{G}}; \wedge^3 H)
\]
takes a very explicit and simple form for flips (or Whitehead moves), which are local deformations of a trivalent fatgraph spine and which correspond to oriented $1$-cells of $\widehat{\mathcal{G}}$.
In this sense, the Morita-Penner construction is ``canonical''.

After the work of Morita and Penner, there have been known several variations and generalizations in other cohomological or topological objects related to the mapping class group; see \cite{ABP, ABMP, BKP, KPT, Ma}.
In each of them one can find interesting constructions making use of the combinatorics of fatgraph spines.

In this paper, based on the same line of study as above, we study invariants of trivalent fatgraph spines obtained by summation over trivalent vertices.
More concretely, we focus on a $1$-cocycle introduced by Penner in his textbook on decorated Teichm\"{u}ller theory \cite{PenBook},
\[
s \in Z^1(\widehat{\mathcal{G}}; S^2 (\wedge^2 H)),
\]
where $S^2(\wedge^2 H)$ is the second symmetric power of the second exterior power of $H$.
The coefficient of $s$ comes from a description of the target of the second Johnson homomorphism given in \cite{MorCasson, MorTaniguchi}.
As was shown in \cite{KPT}, the corresponding twisted cohomology class in $H^1(\mathcal{M}; S^2(\wedge^2 H))$ is trivial over the \emph{rational} coefficients, but this is simply because the cohomology group itself is trivial.
We will prove that this vanishing of the cohomology class more directly and over the \emph{integral} coefficients, by giving an explicit $0$-cochain
\[
\xi \in C^0(\widehat{\mathcal{G}}; S^2(\wedge^2 H))
\]
which cobounds $s$ and is $\mathcal{M}$-equivariant.
Since $0$-cells of $\widehat{\mathcal{G}}$ correspond to trivalent fatgraph spines, $\xi$ is nothing but an assignment
\[
\xi_G \in S^2(\wedge^2 H)
\]
to each trivalent fatgraph spine $G \subset \Sigma$.
The formula for $\xi_G$ is of the form
\[
\xi_G = \frac{1}{2} \sum_v \eta_v \cdot \eta_v,
\]
where the sum is taken over all trivalent vertices of $G$, and $\eta_v \in \wedge^2 H$ is defind by using the homology classes dual to the oriented edges pointing toward $v$. 

As a by-product of our study, we find a simple combinatorial formula for the symplectic form
\[
\omega \in \wedge^2 H
\]
of the underlying surface of a given fatgraph spine $G$.
The element $\omega$ is dual to the intersection form on $H$.
Our formula is of the form
\[
\omega = \frac{1}{2} \sum_v \eta_v.
\]

The results above will be stated more precisely and proved in \S \ref{sec:results}
(Theorem \ref{thm:main} and Theorem \ref{thm:symplectic_form}).

In order to simplify the exposition we restrict ourselves to the case of a once punctured surface, but the general case may be addressed similarly.

\subsection*{Acknowledgements}

We thank Gw\'ena\"el Massuyeau for pointing out a relation between our work and his paper \cite{Ma}.

\section{Fatgraph complex and Penner's cocycle} \label{sec:pre}

We collect basic materials about the fatgraph complex of a punctured surface, and recall the definition of Penner's cocycle.
For more details, see \cite{PenBook, KPT}.

Throughout the paper, we fix a positive integer $g$.

\subsection{Fatgraph spines}

Let $\Sigma = \Sigma^1_g$ be a closed oriented surface of genus $g$ together with a distinguished basepoint $*$.
We call $\Sigma$ a \emph{once punctured} surface of genus $g$.

By a graph, we mean a finite CW complex of dimension one.
Each edge of a graph admits two orientations.
An \emph{oriented edge} of a graph is an edge of the graph equipped with an orientation.
If $e$ is an oriented edge of a graph, we denote by $\overline{e}$ the oriented edge that is obtained by reversing the orientation of $e$.

\begin{dfn}
A \emph{fatgraph spine} of $\Sigma$ is a graph $G$ embedded in $\Sigma \setminus \{ * \}$ such that the inclusion map $G\hookrightarrow \Sigma \setminus \{ *\}$ is a homotpy equivalence and the valency of every vertex of $G$ is at least three.
\end{dfn}

\begin{figure}[ht]
\begin{minipage}[b]{0.49\hsize}
  \centering
{\unitlength 0.1in%
\begin{picture}(16.0000,12.0000)(6.0000,-18.0000)%
%
\special{pn 13}%
\special{ar 1400 1200 800 600 0.0000000 6.2831853}%
%
\special{pn 13}%
\special{ar 1400 1200 300 200 3.1415927 6.2831853}%
%
\special{pn 13}%
\special{ar 1400 1140 330 220 6.2831853 3.1415927}%
%
\special{pn 8}%
\special{ar 1400 1200 550 400 0.0000000 6.2831853}%
%
\special{pn 4}%
\special{sh 1}%
\special{ar 1010 920 16 16 0 6.2831853}%
%
\special{pn 4}%
\special{sh 1}%
\special{ar 1790 920 16 16 0 6.2831853}%
%
\special{pn 8}%
\special{ar 1240 920 228 94 1.4341953 3.1415927}%
%
\special{pn 8}%
\special{ar 1640 920 150 286 4.7123890 6.2831853}%
%
\special{pn 8}%
\special{pn 8}%
\special{pa 1640 636}%
\special{pa 1603 636}%
\special{pa 1572 637}%
\special{fp}%
\special{pa 1504 645}%
\special{pa 1500 646}%
\special{pa 1472 657}%
\special{pa 1448 672}%
\special{pa 1445 676}%
\special{fp}%
\special{pa 1412 736}%
\special{pa 1407 753}%
\special{pa 1401 787}%
\special{pa 1400 802}%
\special{fp}%
\special{pa 1396 871}%
\special{pa 1394 895}%
\special{pa 1389 927}%
\special{pa 1385 938}%
\special{fp}%
\special{pa 1344 990}%
\special{pa 1338 994}%
\special{pa 1307 1005}%
\special{pa 1280 1012}%
\special{fp}%
%
\special{pn 4}%
\special{sh 1}%
\special{ar 2080 1200 16 16 0 6.2831853}%
\put(20.4000,-13.2000){\makebox(0,0)[lb]{$*$}}%
%
\special{pn 8}%
\special{pa 1440 1600}%
\special{pa 1360 1580}%
\special{fp}%
\special{pa 1440 1600}%
\special{pa 1360 1620}%
\special{fp}%
%
\special{pn 8}%
\special{pa 1274 812}%
\special{pa 1192 809}%
\special{fp}%
\special{pa 1274 812}%
\special{pa 1200 848}%
\special{fp}%
%
\special{pn 8}%
\special{pa 1760 750}%
\special{pa 1765 832}%
\special{fp}%
\special{pa 1760 750}%
\special{pa 1803 820}%
\special{fp}%
\put(11.6000,-7.7200){\makebox(0,0)[lb]{$e_3$}}%
\put(18.3000,-8.7200){\makebox(0,0)[lb]{$e_2$}}%
\put(13.6000,-17.4200){\makebox(0,0)[lb]{$e_1$}}%
\end{picture}}%
\caption{Fatgraph spine.}
\label{fig:fgs}
\end{minipage}
\begin{minipage}[b]{0.49\hsize}
  \centering
{\unitlength 0.1in%
\begin{picture}(16.0000,16.7500)(18.0000,-29.2500)%
%
\special{pn 8}%
\special{pa 2200 1500}%
\special{pa 3000 1500}%
\special{pa 3400 2193}%
\special{pa 3000 2886}%
\special{pa 2200 2886}%
\special{pa 1800 2193}%
\special{pa 2200 1500}%
\special{pa 3000 1500}%
\special{fp}%
%
\special{pn 8}%
\special{pa 2600 1500}%
\special{pa 2520 1480}%
\special{fp}%
\special{pa 2600 1500}%
\special{pa 2520 1520}%
\special{fp}%
%
\special{pn 8}%
\special{pa 2560 2886}%
\special{pa 2640 2866}%
\special{fp}%
\special{pa 2560 2886}%
\special{pa 2640 2906}%
\special{fp}%
%
\special{pn 8}%
\special{pa 1977 2496}%
\special{pa 2035 2555}%
\special{fp}%
\special{pa 1977 2496}%
\special{pa 2000 2575}%
\special{fp}%
%
\special{pn 8}%
\special{pa 2033 1788}%
\special{pa 1976 1848}%
\special{fp}%
\special{pa 2033 1788}%
\special{pa 2010 1868}%
\special{fp}%
%
\special{pn 8}%
\special{pa 3239 1911}%
\special{pa 3216 1831}%
\special{fp}%
\special{pa 3239 1911}%
\special{pa 3182 1852}%
\special{fp}%
%
\special{pn 8}%
\special{pa 3189 2557}%
\special{pa 3249 2499}%
\special{fp}%
\special{pa 3189 2557}%
\special{pa 3214 2479}%
\special{fp}%
%
\special{pn 4}%
\special{sh 1}%
\special{ar 2600 2195 16 16 0 6.2831853}%
\put(25.5500,-23.9000){\makebox(0,0)[lb]{$*$}}%
\put(18.2500,-26.9500){\makebox(0,0)[lb]{$e_1$}}%
\put(18.0000,-17.9500){\makebox(0,0)[lb]{$e_2$}}%
\put(25.0000,-13.9500){\makebox(0,0)[lb]{$e_3$}}%
\put(33.0000,-18.9500){\makebox(0,0)[lb]{$\overline{e}_1$}}%
\put(33.0000,-26.4500){\makebox(0,0)[lb]{$\overline{e}_2$}}%
\put(25.0000,-30.7000){\makebox(0,0)[lb]{$\overline{e}_3$}}%
\end{picture}}%
\caption{Cut along $G$.}
\label{fig:fgscut}
\end{minipage}
\end{figure}

Figure \ref{fig:fgs} shows a fatgraph spine of a surface of genus one.
The condition that $G\hookrightarrow \Sigma \setminus \{ * \}$ is a homotopy equivalence implies that if we cut $\Sigma$ along $G$ we obtain a polygon with a puncture in its interior which corresponds to $*\in \Sigma$.
We give the polygon an orientation which matches that of $\Sigma$.
Going around the boundary of the polygon in the \emph{clockwise} manner, we traverse each oriented edge of $G$ exactly once.
This gives the set of oriented edges of $G$ a cyclic ordering, which we denote by $\prec$.
For instance, the polygon obtained from the fatgraph spine in Figure \ref{fig:fgs} is depicted in Figure \ref{fig:fgscut}, and we have
$e_1 \prec e_2 \prec e_3 \prec \overline{e}_1 \prec \overline{e}_2 \prec  \overline{e}_3 \prec e_1$.

For each vertex of a fatgraph spine of $\Sigma$, the set of oriented edges pointing toward the vertex is endowed with a cyclic ordering induced from the orientation of the tangent space of $\Sigma$ at the vertex.

\subsection{Fatgraph complex and flips}

The \emph{fatgraph complex} of $\Sigma$, denoted by $\widehat{\mathcal{G}} = \widehat{\mathcal{G}}(\Sigma)$, is a CW complex of dimension $4g-3$ whose cells are in one to one correspondence with isotopy classes of fatgraph spines of $\Sigma$.
Each $0$-cell of $\widehat{\mathcal{G}}$ corresponds to a trivalent fatgraph spine, i.e. a fatgraph spine with all the vertices being trivalent.
In general, $n$-dimensional cells of $\widehat{\mathcal{G}}$ correspond to fatgraph spines which can be obtained from a trivalent one by collapsing $n$ edges.

Each oriented $1$-cell of $\widehat{\mathcal{G}}$ corresponds to a \emph{flip} (or a \emph{Whitehead move}) between two trivalent fatgraph spines;
if $e$ is an edge of a trivalent fatgraph spine $G$, a flip along $e$, denoted by $W_e$, is to deform $G$ by collapsing and expanding $e$ in the natural way to obtain another trivalent fatgraph spine $G'$.
We use a shorthand notation
\[
G\overset{W_e}{\longrightarrow} G'
\]
for the flip $W_e$.
See Figure \ref{fig:flip}, where the orientation of the surface matches the trigonometric orientation of the plan.
Here, since there is a canonical bijective correspondence between the four oriented edges in $G$ adjacent to $e$ and the four oriented edges in $G'$ adjacent to $e'$, we use the same letter for each pair of corresponding edges.
Note that it can happen for instance that $a$ and $c$ have the same underlying edge.

\begin{figure}[ht]
  \centering
{\unitlength 0.1in%
\begin{picture}(32.7500,16.0000)(8.7500,-24.4800)%
%
\special{pn 8}%
\special{pa 1300 1600}%
\special{pa 1900 1600}%
\special{fp}%
%
\special{pn 8}%
\special{pa 1900 1600}%
\special{pa 2200 1300}%
\special{fp}%
%
\special{pn 8}%
\special{pa 1900 1600}%
\special{pa 2200 1900}%
\special{fp}%
%
\special{pn 8}%
\special{pa 1300 1600}%
\special{pa 1000 1300}%
\special{fp}%
%
\special{pn 8}%
\special{pa 1300 1600}%
\special{pa 1000 1900}%
\special{fp}%
%
\special{pn 8}%
\special{pa 3800 1900}%
\special{pa 3800 1300}%
\special{fp}%
%
\special{pn 8}%
\special{pa 3800 1300}%
\special{pa 3500 1000}%
\special{fp}%
%
\special{pn 8}%
\special{pa 3800 1300}%
\special{pa 4100 1000}%
\special{fp}%
%
\special{pn 8}%
\special{pa 3800 1900}%
\special{pa 3500 2200}%
\special{fp}%
%
\special{pn 8}%
\special{pa 3800 1900}%
\special{pa 4100 2200}%
\special{fp}%
%
\special{pn 13}%
\special{pa 2500 1600}%
\special{pa 3200 1600}%
\special{fp}%
\special{sh 1}%
\special{pa 3200 1600}%
\special{pa 3133 1580}%
\special{pa 3147 1600}%
\special{pa 3133 1620}%
\special{pa 3200 1600}%
\special{fp}%
\put(27.0000,-15.2500){\makebox(0,0)[lb]{$W_e$}}%
\put(15.0000,-26.0000){\makebox(0,0)[lb]{$G$}}%
\put(37.0000,-26.0000){\makebox(0,0)[lb]{$G'$}}%
\put(15.5000,-15.5000){\makebox(0,0)[lb]{$e$}}%
\put(38.7500,-16.0000){\makebox(0,0)[lb]{$e'$}}%
\put(22.2500,-20.0000){\makebox(0,0)[lb]{$a$}}%
\put(22.2500,-13.0000){\makebox(0,0)[lb]{$b$}}%
\put(8.7500,-13.0000){\makebox(0,0)[lb]{$c$}}%
\put(8.7500,-20.0000){\makebox(0,0)[lb]{$d$}}%
\put(41.5000,-23.0000){\makebox(0,0)[lb]{$a$}}%
\put(41.5000,-10.0000){\makebox(0,0)[lb]{$b$}}%
\put(33.7500,-10.0000){\makebox(0,0)[lb]{$c$}}%
\put(33.7500,-23.0000){\makebox(0,0)[lb]{$d$}}%
%
\special{pn 8}%
\special{pa 1180 1480}%
\special{pa 1140 1400}%
\special{fp}%
\special{pa 1180 1480}%
\special{pa 1100 1440}%
\special{fp}%
%
\special{pn 8}%
\special{pa 2020 1480}%
\special{pa 2060 1400}%
\special{fp}%
\special{pa 2020 1480}%
\special{pa 2100 1440}%
\special{fp}%
%
\special{pn 8}%
\special{pa 2020 1720}%
\special{pa 2060 1800}%
\special{fp}%
\special{pa 2020 1720}%
\special{pa 2100 1760}%
\special{fp}%
%
\special{pn 8}%
\special{pa 1180 1720}%
\special{pa 1140 1800}%
\special{fp}%
\special{pa 1180 1720}%
\special{pa 1100 1760}%
\special{fp}%
%
\special{pn 8}%
\special{pa 3940 1160}%
\special{pa 3980 1080}%
\special{fp}%
\special{pa 3940 1160}%
\special{pa 4020 1120}%
\special{fp}%
%
\special{pn 8}%
\special{pa 3660 1160}%
\special{pa 3620 1080}%
\special{fp}%
\special{pa 3660 1160}%
\special{pa 3580 1120}%
\special{fp}%
%
\special{pn 8}%
\special{pa 3660 2040}%
\special{pa 3620 2120}%
\special{fp}%
\special{pa 3660 2040}%
\special{pa 3580 2080}%
\special{fp}%
%
\special{pn 8}%
\special{pa 3940 2040}%
\special{pa 3980 2120}%
\special{fp}%
\special{pa 3940 2040}%
\special{pa 4020 2080}%
\special{fp}%
\end{picture}}%
\caption{Flip.}
\label{fig:flip}
\end{figure}

Flips can be composed as morphisms of the fundamental path groupoid of the CW complex $\widehat{\mathcal{G}}$.
We read composition of flips from right to left.
There are three types of relations satisfied by compositions of flips:
\begin{description}
\item[Involutivity relation] $W_{e'} W_e = {\rm id}$ in the notation of Figure \ref{fig:flip}.
\item[Commutativity relation] $W_{e_1} W_{e_2} = W_{e_2} W_{e_1}$ if $e_1$ and $e_2$ are edges sharing no vertices in a trivalent fatgraph spine.
\item[Pentagon relation] $W_{f_4} W_{g_3} W_{f_2} W_{g_1} W_{f} = {\rm id}$ in the notation of Figure \ref{fig:pentagon}.
\end{description}

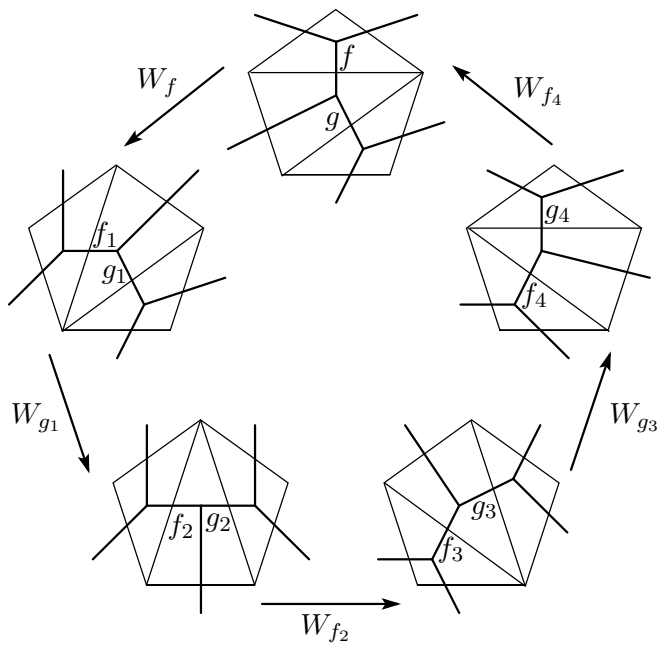
\begin{figure}[ht]
  \centering
{\unitlength 0.1in%
\begin{picture}(33.1800,31.5800)(7.9000,-34.5500)%
%
\special{pn 8}%
\special{pa 1502 3301}%
\special{pa 2062 3301}%
\special{pa 2235 2768}%
\special{pa 1782 2439}%
\special{pa 1329 2768}%
\special{pa 1502 3301}%
\special{pa 2062 3301}%
\special{fp}%
%
\special{pn 8}%
\special{pa 2062 3301}%
\special{pa 1782 2440}%
\special{fp}%
%
\special{pn 8}%
\special{pa 1502 3301}%
\special{pa 1782 2440}%
\special{fp}%
%
\special{pn 8}%
\special{pa 1065 1971}%
\special{pa 1625 1971}%
\special{pa 1798 1438}%
\special{pa 1345 1109}%
\special{pa 892 1438}%
\special{pa 1065 1971}%
\special{pa 1625 1971}%
\special{fp}%
%
\special{pn 8}%
\special{pa 1067 1977}%
\special{pa 1795 1438}%
\special{fp}%
%
\special{pn 8}%
\special{pa 1067 1977}%
\special{pa 1347 1116}%
\special{fp}%
%
\special{pn 8}%
\special{pa 3465 3301}%
\special{pa 2905 3301}%
\special{pa 2732 2768}%
\special{pa 3185 2439}%
\special{pa 3638 2768}%
\special{pa 3465 3301}%
\special{pa 2905 3301}%
\special{fp}%
%
\special{pn 8}%
\special{pa 3463 3307}%
\special{pa 2735 2768}%
\special{fp}%
%
\special{pn 8}%
\special{pa 3463 3307}%
\special{pa 3183 2446}%
\special{fp}%
%
\special{pn 8}%
\special{pa 2202 1159}%
\special{pa 2762 1159}%
\special{pa 2935 626}%
\special{pa 2482 297}%
\special{pa 2029 626}%
\special{pa 2202 1159}%
\special{pa 2762 1159}%
\special{fp}%
%
\special{pn 8}%
\special{pa 2932 626}%
\special{pa 2204 1165}%
\special{fp}%
%
\special{pn 8}%
\special{pa 3892 1971}%
\special{pa 3332 1971}%
\special{pa 3159 1438}%
\special{pa 3612 1109}%
\special{pa 4065 1438}%
\special{pa 3892 1971}%
\special{pa 3332 1971}%
\special{fp}%
%
\special{pn 8}%
\special{pa 4065 1438}%
\special{pa 3176 1438}%
\special{fp}%
%
\special{pn 8}%
\special{pa 3169 1438}%
\special{pa 3897 1977}%
\special{fp}%
%
\special{pn 13}%
\special{pa 1350 1557}%
\special{pa 1770 1137}%
\special{fp}%
\special{pa 1350 1557}%
\special{pa 1490 1837}%
\special{fp}%
\special{pa 1490 1837}%
\special{pa 1910 1697}%
\special{fp}%
\special{pa 1490 1837}%
\special{pa 1350 2117}%
\special{fp}%
\special{pa 1350 1557}%
\special{pa 1070 1557}%
\special{fp}%
\special{pa 1070 1557}%
\special{pa 790 1837}%
\special{fp}%
\special{pa 1070 1557}%
\special{pa 1070 1137}%
\special{fp}%
%
\special{pn 13}%
\special{pa 3121 2887}%
\special{pa 2841 2467}%
\special{fp}%
\special{pa 3121 2887}%
\special{pa 2981 3167}%
\special{fp}%
\special{pa 2981 3167}%
\special{pa 2701 3167}%
\special{fp}%
\special{pa 2981 3167}%
\special{pa 3121 3447}%
\special{fp}%
\special{pa 3121 2887}%
\special{pa 3401 2747}%
\special{fp}%
\special{pa 3401 2747}%
\special{pa 3541 2467}%
\special{fp}%
\special{pa 3401 2747}%
\special{pa 3681 3027}%
\special{fp}%
%
\special{pn 13}%
\special{pa 1784 2887}%
\special{pa 1784 3447}%
\special{fp}%
\special{pa 1784 2887}%
\special{pa 1504 2887}%
\special{fp}%
\special{pa 1504 2887}%
\special{pa 1504 2467}%
\special{fp}%
\special{pa 1504 2887}%
\special{pa 1224 3167}%
\special{fp}%
\special{pa 1784 2887}%
\special{pa 2064 2887}%
\special{fp}%
\special{pa 2064 2887}%
\special{pa 2344 3167}%
\special{fp}%
\special{pa 2064 2887}%
\special{pa 2064 2467}%
\special{fp}%
%
\special{pn 13}%
\special{pa 3548 1557}%
\special{pa 4108 1697}%
\special{fp}%
\special{pa 3548 1557}%
\special{pa 3548 1277}%
\special{fp}%
\special{pa 3548 1277}%
\special{pa 3268 1137}%
\special{fp}%
\special{pa 3548 1277}%
\special{pa 3968 1137}%
\special{fp}%
\special{pa 3548 1557}%
\special{pa 3408 1837}%
\special{fp}%
\special{pa 3408 1837}%
\special{pa 3688 2117}%
\special{fp}%
\special{pa 3408 1837}%
\special{pa 3128 1837}%
\special{fp}%
%
\special{pn 13}%
\special{pa 2484 745}%
\special{pa 1924 1025}%
\special{fp}%
\special{pa 2484 745}%
\special{pa 2484 465}%
\special{fp}%
\special{pa 2484 465}%
\special{pa 2064 325}%
\special{fp}%
\special{pa 2484 465}%
\special{pa 2904 325}%
\special{fp}%
\special{pa 2484 745}%
\special{pa 2624 1025}%
\special{fp}%
\special{pa 2624 1025}%
\special{pa 2484 1305}%
\special{fp}%
\special{pa 2624 1025}%
\special{pa 3044 885}%
\special{fp}%
%
\special{pn 8}%
\special{pa 2043 626}%
\special{pa 2925 626}%
\special{fp}%
\put(24.9700,-6.1500){\makebox(0,0)[lb]{$f$}}%
\put(24.2700,-9.2800){\makebox(0,0)[lb]{$g$}}%
\put(12.7000,-17.2800){\makebox(0,0)[lb]{$g_1$}}%
\put(12.1900,-15.4100){\makebox(0,0)[lb]{$f_1$}}%
\put(16.1000,-30.6100){\makebox(0,0)[lb]{$f_2$}}%
\put(18.0900,-30.2400){\makebox(0,0)[lb]{$g_2$}}%
\put(30.0800,-31.8100){\makebox(0,0)[lb]{$f_3$}}%
\put(31.8400,-29.5800){\makebox(0,0)[lb]{$g_3$}}%
\put(34.4000,-18.5700){\makebox(0,0)[lb]{$f_4$}}%
\put(35.6400,-14.1400){\makebox(0,0)[lb]{$g_4$}}%
%
\special{pn 13}%
\special{pa 1900 600}%
\special{pa 1400 1000}%
\special{fp}%
\special{sh 1}%
\special{pa 1400 1000}%
\special{pa 1465 974}%
\special{pa 1442 967}%
\special{pa 1440 943}%
\special{pa 1400 1000}%
\special{fp}%
\put(14.5000,-7.5000){\makebox(0,0)[lb]{$W_f$}}%
%
\special{pn 13}%
\special{pa 1000 2100}%
\special{pa 1200 2700}%
\special{fp}%
\special{sh 1}%
\special{pa 1200 2700}%
\special{pa 1198 2630}%
\special{pa 1183 2649}%
\special{pa 1160 2643}%
\special{pa 1200 2700}%
\special{fp}%
\put(8.0000,-25.0000){\makebox(0,0)[lb]{$W_{g_1}$}}%
%
\special{pn 13}%
\special{pa 2100 3400}%
\special{pa 2800 3400}%
\special{fp}%
\special{sh 1}%
\special{pa 2800 3400}%
\special{pa 2733 3380}%
\special{pa 2747 3400}%
\special{pa 2733 3420}%
\special{pa 2800 3400}%
\special{fp}%
\put(23.0000,-36.0000){\makebox(0,0)[lb]{$W_{f_2}$}}%
%
\special{pn 13}%
\special{pa 3700 2700}%
\special{pa 3900 2100}%
\special{fp}%
\special{sh 1}%
\special{pa 3900 2100}%
\special{pa 3860 2157}%
\special{pa 3883 2151}%
\special{pa 3898 2170}%
\special{pa 3900 2100}%
\special{fp}%
\put(39.0000,-25.0000){\makebox(0,0)[lb]{$W_{g_3}$}}%
%
\special{pn 13}%
\special{pa 3600 1000}%
\special{pa 3100 600}%
\special{fp}%
\special{sh 1}%
\special{pa 3100 600}%
\special{pa 3140 657}%
\special{pa 3142 633}%
\special{pa 3165 626}%
\special{pa 3100 600}%
\special{fp}%
\put(34.0000,-8.0000){\makebox(0,0)[lb]{$W_{f_4}$}}%
\end{picture}}%
\caption{Pentagon relation.}
\label{fig:pentagon}
\end{figure}

The involutivity relation comes from the fact that $W_{e'}$ is the oriented $1$-cell of $\widehat{\mathcal{G}}$ obtained by reversing the orientation of $W_e$.
The commutativity and pentagon relations come from the boundary of $2$-cells of $\widehat{\mathcal{G}}$.

There is a homotopy equivalence between the Teichm\"{u}ller space of the punctured surface $\Sigma$ and $\widehat{\mathcal{G}}$ which is equivariant under the action of the mapping class group of $\Sigma$.
In particular, the space $\widehat{\mathcal{G}}$ is contractible.
This implies the following basic facts:
\begin{itemize}
\item For any trivalent fatgraph spines $G$ and $G'$, there is a finite sequence of flips connecting $G$ to $G'$:
\[
G = G_0 \overset{W_1}{\longrightarrow} G_1 \overset{W_2}{\longrightarrow} \cdots \overset{W_m}{\longrightarrow} G_m = G'.
\]
\item Any two sequences of flips connecting $G$ to $G'$ are related by a finite application of the three types of relations above.
\end{itemize}

\subsection{Homology marking}

We denote by $H$ the first integral homology group of $\Sigma$.
Let $G$ be a fatgraph spine of $\Sigma$ and let $e$ be an oriented edge of $G$.
Then there is an oriented simple loop $\hat{e}$ in $\Sigma$ such that
$\hat{e}$ and $G$ have only one intersection which lies in the interior of $e$, and the ordered pair $(\hat{e},e)$ matches the positive frame for the tangent space of $\Sigma$ at the intersection;
see Figure \ref{fig:hmarking}.
The homotopy class of $\hat{e}$ is well defined.
We denote by
\[
\mu(e) \in H
\]
the homology class of $\hat{e}$ and call it the \emph{homology marking} of $e$.

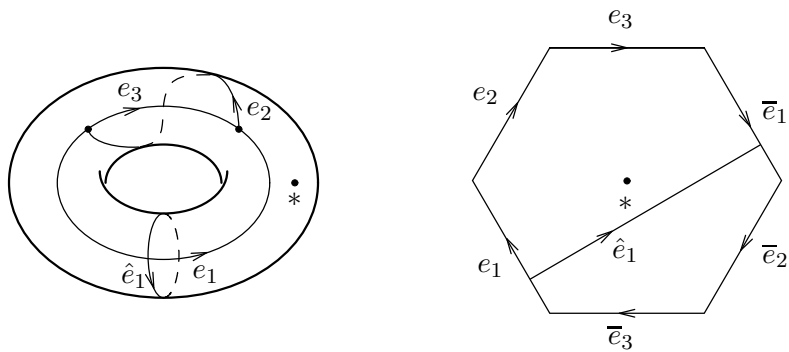
\begin{figure}[ht]
  \centering
{\unitlength 0.1in%
\begin{picture}(40.0000,16.7500)(6.0000,-19.1300)%
%
\special{pn 13}%
\special{ar 1400 1200 800 600 0.0000000 6.2831853}%
%
\special{pn 13}%
\special{ar 1400 1200 300 200 3.1415927 6.2831853}%
%
\special{pn 13}%
\special{ar 1400 1140 330 220 6.2831853 3.1415927}%
%
\special{pn 8}%
\special{ar 1400 1200 550 400 0.0000000 6.2831853}%
%
\special{pn 4}%
\special{sh 1}%
\special{ar 1010 920 16 16 0 6.2831853}%
%
\special{pn 4}%
\special{sh 1}%
\special{ar 1790 920 16 16 0 6.2831853}%
%
\special{pn 8}%
\special{ar 1240 920 228 94 1.4341953 3.1415927}%
%
\special{pn 8}%
\special{ar 1640 920 150 286 4.7123890 6.2831853}%
%
\special{pn 8}%
\special{pn 8}%
\special{pa 1640 636}%
\special{pa 1603 636}%
\special{pa 1572 637}%
\special{fp}%
\special{pa 1504 645}%
\special{pa 1500 646}%
\special{pa 1472 657}%
\special{pa 1448 672}%
\special{pa 1445 676}%
\special{fp}%
\special{pa 1412 736}%
\special{pa 1407 753}%
\special{pa 1401 787}%
\special{pa 1400 802}%
\special{fp}%
\special{pa 1396 871}%
\special{pa 1394 895}%
\special{pa 1389 927}%
\special{pa 1385 938}%
\special{fp}%
\special{pa 1344 990}%
\special{pa 1338 994}%
\special{pa 1307 1005}%
\special{pa 1280 1012}%
\special{fp}%
%
\special{pn 4}%
\special{sh 1}%
\special{ar 2080 1200 16 16 0 6.2831853}%
\put(20.4000,-13.2000){\makebox(0,0)[lb]{$*$}}%
%
\special{pn 8}%
\special{pa 1625 1564}%
\special{pa 1543 1568}%
\special{fp}%
\special{pa 1625 1564}%
\special{pa 1554 1606}%
\special{fp}%
%
\special{pn 8}%
\special{pa 1274 812}%
\special{pa 1192 809}%
\special{fp}%
\special{pa 1274 812}%
\special{pa 1200 848}%
\special{fp}%
%
\special{pn 8}%
\special{pa 1760 750}%
\special{pa 1765 832}%
\special{fp}%
\special{pa 1760 750}%
\special{pa 1803 820}%
\special{fp}%
\put(11.6000,-7.7200){\makebox(0,0)[lb]{$e_3$}}%
\put(18.3000,-8.7200){\makebox(0,0)[lb]{$e_2$}}%
\put(15.5000,-17.2200){\makebox(0,0)[lb]{$e_1$}}%
%
\special{pn 8}%
\special{pa 3400 495}%
\special{pa 4200 495}%
\special{pa 4600 1188}%
\special{pa 4200 1881}%
\special{pa 3400 1881}%
\special{pa 3000 1188}%
\special{pa 3400 495}%
\special{pa 4200 495}%
\special{fp}%
%
\special{pn 8}%
\special{pa 3800 495}%
\special{pa 3720 475}%
\special{fp}%
\special{pa 3800 495}%
\special{pa 3720 515}%
\special{fp}%
%
\special{pn 8}%
\special{pa 3760 1881}%
\special{pa 3840 1861}%
\special{fp}%
\special{pa 3760 1881}%
\special{pa 3840 1901}%
\special{fp}%
%
\special{pn 8}%
\special{pa 3177 1491}%
\special{pa 3235 1550}%
\special{fp}%
\special{pa 3177 1491}%
\special{pa 3200 1570}%
\special{fp}%
%
\special{pn 8}%
\special{pa 3233 783}%
\special{pa 3176 843}%
\special{fp}%
\special{pa 3233 783}%
\special{pa 3210 863}%
\special{fp}%
%
\special{pn 8}%
\special{pa 4439 906}%
\special{pa 4416 826}%
\special{fp}%
\special{pa 4439 906}%
\special{pa 4382 847}%
\special{fp}%
%
\special{pn 8}%
\special{pa 4389 1552}%
\special{pa 4449 1494}%
\special{fp}%
\special{pa 4389 1552}%
\special{pa 4414 1474}%
\special{fp}%
%
\special{pn 4}%
\special{sh 1}%
\special{ar 3800 1190 16 16 0 6.2831853}%
\put(37.5400,-13.4400){\makebox(0,0)[lb]{$*$}}%
\put(30.2500,-16.9000){\makebox(0,0)[lb]{$e_1$}}%
\put(30.0000,-7.9000){\makebox(0,0)[lb]{$e_2$}}%
\put(37.0000,-3.9000){\makebox(0,0)[lb]{$e_3$}}%
\put(45.0000,-8.9000){\makebox(0,0)[lb]{$\overline{e}_1$}}%
\put(45.0000,-16.4000){\makebox(0,0)[lb]{$\overline{e}_2$}}%
\put(37.0000,-20.6500){\makebox(0,0)[lb]{$\overline{e}_3$}}%
%
\special{pn 8}%
\special{ar 1400 1580 80 220 1.5707963 4.7123890}%
%
\special{pn 8}%
\special{pn 8}%
\special{pa 1400 1360}%
\special{pa 1405 1360}%
\special{pa 1406 1361}%
\special{pa 1409 1361}%
\special{pa 1410 1362}%
\special{pa 1412 1362}%
\special{pa 1412 1363}%
\special{pa 1414 1363}%
\special{pa 1415 1364}%
\special{pa 1418 1365}%
\special{pa 1418 1366}%
\special{pa 1421 1367}%
\special{pa 1421 1368}%
\special{pa 1422 1368}%
\special{pa 1422 1369}%
\special{pa 1423 1369}%
\special{pa 1423 1370}%
\special{pa 1424 1370}%
\special{pa 1432 1378}%
\special{pa 1432 1379}%
\special{pa 1433 1379}%
\special{pa 1433 1380}%
\special{pa 1436 1383}%
\special{pa 1436 1384}%
\special{pa 1437 1384}%
\special{pa 1437 1386}%
\special{pa 1438 1386}%
\special{pa 1438 1387}%
\special{pa 1440 1389}%
\special{pa 1440 1390}%
\special{fp}%
\special{pa 1461 1439}%
\special{pa 1462 1442}%
\special{pa 1463 1443}%
\special{pa 1463 1445}%
\special{pa 1464 1446}%
\special{pa 1464 1450}%
\special{pa 1465 1451}%
\special{pa 1465 1453}%
\special{pa 1466 1454}%
\special{pa 1466 1458}%
\special{pa 1467 1459}%
\special{pa 1467 1461}%
\special{pa 1468 1462}%
\special{pa 1469 1471}%
\special{pa 1470 1472}%
\special{pa 1470 1476}%
\special{pa 1471 1477}%
\special{pa 1471 1480}%
\special{pa 1472 1482}%
\special{pa 1473 1493}%
\special{pa 1474 1494}%
\special{fp}%
\special{pa 1479 1552}%
\special{pa 1479 1555}%
\special{pa 1480 1556}%
\special{pa 1480 1603}%
\special{pa 1479 1605}%
\special{pa 1479 1610}%
\special{fp}%
\special{pa 1473 1668}%
\special{pa 1473 1668}%
\special{pa 1473 1673}%
\special{pa 1472 1674}%
\special{pa 1471 1683}%
\special{pa 1470 1684}%
\special{pa 1470 1689}%
\special{pa 1469 1690}%
\special{pa 1468 1697}%
\special{pa 1467 1698}%
\special{pa 1467 1702}%
\special{pa 1466 1703}%
\special{pa 1466 1705}%
\special{pa 1465 1707}%
\special{pa 1465 1710}%
\special{pa 1464 1711}%
\special{pa 1464 1713}%
\special{pa 1463 1714}%
\special{pa 1463 1717}%
\special{pa 1462 1718}%
\special{pa 1461 1722}%
\special{fp}%
\special{pa 1439 1772}%
\special{pa 1439 1773}%
\special{pa 1438 1773}%
\special{pa 1438 1774}%
\special{pa 1436 1776}%
\special{pa 1436 1777}%
\special{pa 1435 1777}%
\special{pa 1434 1780}%
\special{pa 1433 1780}%
\special{pa 1433 1781}%
\special{pa 1432 1781}%
\special{pa 1432 1782}%
\special{pa 1427 1787}%
\special{pa 1427 1788}%
\special{pa 1426 1788}%
\special{pa 1423 1791}%
\special{pa 1422 1791}%
\special{pa 1422 1792}%
\special{pa 1421 1792}%
\special{pa 1421 1793}%
\special{pa 1418 1794}%
\special{pa 1418 1795}%
\special{pa 1416 1795}%
\special{pa 1416 1796}%
\special{pa 1414 1796}%
\special{pa 1414 1797}%
\special{pa 1412 1797}%
\special{pa 1412 1798}%
\special{pa 1410 1798}%
\special{pa 1409 1799}%
\special{pa 1406 1799}%
\special{pa 1405 1800}%
\special{pa 1400 1800}%
\special{fp}%
%
\special{pn 8}%
\special{pa 1350 1749}%
\special{pa 1345 1667}%
\special{fp}%
\special{pa 1350 1749}%
\special{pa 1307 1679}%
\special{fp}%
\put(11.8000,-17.4800){\makebox(0,0)[lb]{$\hat{e}_1$}}%
%
\special{pn 8}%
\special{pa 3300 1702}%
\special{pa 4490 1002}%
\special{fp}%
%
\special{pn 8}%
\special{pa 3722 1453}%
\special{pa 3645 1479}%
\special{fp}%
\special{pa 3722 1453}%
\special{pa 3666 1513}%
\special{fp}%
\put(37.2200,-16.1300){\makebox(0,0)[lb]{$\hat{e}_1$}}%
\end{picture}}%
\caption{Homology marking: the loop $\hat{e}_1$ is dual to $e_1$.}
\label{fig:hmarking}
\end{figure}

The homology marking satisfies the following properties:
\begin{enumerate}
\item
For any oriented edge $e$ of $G$, we have $\mu(\overline{e}) = - \mu(e)$.
\item
If $v$ is a vertex of $G$ and $e_1,\ldots,e_n$ are the oriented edges of $G$ pointing toward $v$, then
$
\sum_{i=1}^n \mu(e_i) = 0
$.
\end{enumerate}

\begin{example} \label{ex:hmarking}
\begin{enumerate}
\item
If $v$ is a trivalent vertex of $G$ and the three oriented edges pointing toward $v$ are arranged as in Figure \ref{fig:abc}, we have
\[
\mu(a_v) + \mu(b_v) + \mu(c_v) = 0.
\]
\item
In the notation of Figure \ref{fig:flip}, we have
\[
\mu(a) + \mu(b) + \mu(c) + \mu(d) = 0.
\]
\end{enumerate}
\end{example}

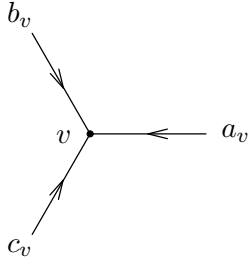
\begin{figure}[ht]
  \centering
{\unitlength 0.1in%
\begin{picture}(11.1000,12.3000)(11.7000,-15.2500)%
%
\special{pn 8}%
\special{pa 1600 1000}%
\special{pa 2200 1000}%
\special{fp}%
%
\special{pn 8}%
\special{pa 1600 1000}%
\special{pa 1300 475}%
\special{fp}%
%
\special{pn 8}%
\special{pa 1600 1000}%
\special{pa 1300 1525}%
\special{fp}%
%
\special{pn 8}%
\special{pa 1900 1000}%
\special{pa 2000 1025}%
\special{fp}%
\special{pa 1900 1000}%
\special{pa 2000 975}%
\special{fp}%
%
\special{pn 8}%
\special{pa 1462 762}%
\special{pa 1438 662}%
\special{fp}%
\special{pa 1462 762}%
\special{pa 1394 685}%
\special{fp}%
%
\special{pn 8}%
\special{pa 1460 1242}%
\special{pa 1436 1342}%
\special{fp}%
\special{pa 1460 1242}%
\special{pa 1392 1319}%
\special{fp}%
%
\special{pn 4}%
\special{sh 1}%
\special{ar 1600 1000 16 16 0 6.2831853}%
\put(14.2500,-10.4500){\makebox(0,0)[lb]{$v$}}%
\put(22.8000,-10.4000){\makebox(0,0)[lb]{$a_v$}}%
\put(11.7000,-4.4000){\makebox(0,0)[lb]{$b_v$}}%
\put(11.7000,-16.4000){\makebox(0,0)[lb]{$c_v$}}%
\end{picture}}%
\caption{Oriented edges pointing toward $v$.}
\label{fig:abc}
\end{figure}

\subsection{Penner's cocycle}

We keep the notation in Figure \ref{fig:flip}.
For the flip $W_e$, we set
\[
s(W_e) := (\mu(a) \wedge \mu(c)) \cdot (\mu(b) \wedge \mu(d))
\in S^2(\wedge^2 H).
\]
Here, $\wedge^2 H$ is the second exterior power of $H$, and $S^2(\wedge^2 H)$ is the second symmetric power of $\wedge^2 H$.
It is easy to see that $s(W_{e'}) = - s(W_e)$.
Since flips correspond to oriented $1$-cells in $\widehat{\mathcal{G}}$, we can regard $s$ as a $1$-cochain of $\widehat{\mathcal{G}}$ with values in $S^2 (\wedge^2 H)$.
It is in fact a $1$-cocycle:
\[
s \in Z^1( \widehat{\mathcal{G}}; S^2 (\wedge^2 H)).
\]
This means that $s$ respects the three types of relations among flips.
The only nontrivial relation one has to check is the pentagon relation, and we must have
\[
s(W_f) + s(W_{g_1}) + s(W_{f_2}) + s(W_{g_3}) + s(W_{f_4}) = 0
\]
where we use the notation in Figure \ref{fig:pentagon}.
A proof of this can be found in \cite[\S 3, Theorem 5]{KPT}.

The cocycle $s$ was introduced by Penner in \cite[Chapter 6, Remark 2.8]{PenBook}.

\section{Results} \label{sec:results}

First we fix some conventions in \S \ref{subsec:convention}.
Then we state and prove our results.

\subsection{Mapping class group and tensor modules of $H$} \label{subsec:convention}

Let $\mathcal{M}_{g,*}$ be the mapping class group of the punctured surface $\Sigma$.
Namely, $\mathcal{M}_{g,*}$ is the group of orientation preserving homeomorphisms of $\Sigma$ that preserve $*$ modulo isotopies that fix $*$.
The group $\mathcal{M}_{g,*}$ acts naturally as cellular automorphisms on the fatgraph complex $\widehat{\mathcal{G}}$.
It acts also on $H=H_1(\Sigma; \mathbb{Z})$ and hence on tensor modules of $H$ such as $\wedge^2 H$ and $S^2(\wedge^2 H)$.

The cocycle $s$ is $\mathcal{M}_{g,*}$-equivariant in the sense that
\[
\varphi\cdot s(W_{e}) = s(W_{\varphi(e)})
\]
for any flip $W_e$ and $\varphi \in \mathcal{M}_{g,*}$.

We regard $\wedge^2 H$ as a submodule of $H^{\otimes 2}$ defined as the image of the homomorphism
\[
H^{\otimes 2} \longrightarrow H^{\otimes 2}, \quad
x\otimes y \mapsto x\wedge y:= x\otimes y - y\otimes x,
\]
and $S^2 (\wedge^2 H)$ as a submodule of $\wedge^2 H \otimes \wedge^2 H$ defined as the image of the homomorphism
\[
\wedge^2 H \otimes \wedge^2H \longrightarrow \wedge^2 H \otimes \wedge^2H, \quad
u\otimes v \mapsto u\cdot v := u\otimes v + v\otimes u.
\]
Hence we can think of $S^2(\wedge^2 H)$ as a submodule of $H^{\otimes 2} \otimes H^{\otimes 2} = H^{\otimes 4}$.

The first homology group $H= H_1(\Sigma; \mathbb{Z})$ is equipped with a non-degenerate skew-symmetric bilinear form
\[
H^{\otimes 2} \longrightarrow \mathbb{Z},
\quad
x\otimes y \mapsto (x \cdot y)
\]
called the \emph{intersection form}.
Note that it sends
\begin{equation}
\label{ex:2xy}
x\wedge y  \mapsto 2 (x\cdot y)
\end{equation}
for any $x,y \in H$.

\subsection{A cobounding cochain for Penner's cocycle} \label{subsec:cobounding}

Let $G$ be a trivalent fatgraph spine of $\Sigma$ and let $v$ be a trivalent vertex of $G$.
In the notation in Figure \ref{fig:abc}, it holds that
\[
\mu(a_v) \wedge \mu(b_v) = \mu(b_v) \wedge \mu(c_v) = \mu(c_v) \wedge \mu(a_v)
\]
since $\mu(a_v) + \mu(b_v) + \mu(c_v) = 0$ as explained in Example \ref{ex:hmarking} (1).
We put
\[
\eta_v := \mu(a_v) \wedge \mu(b_v) \in \wedge^2 H.
\]

\begin{dfn}
\[
\xi_G := \frac{1}{2} \sum_v \eta_v \cdot \eta_v =
\sum_v \eta_v \otimes \eta_v.
\]
Here, the sum is taken over all trivalent vertices of $G$.
\end{dfn}

A priori, $\xi_G$ is an element of $(1/2) S^2 (\wedge^2 H)$.
The collection $\xi = \{ \xi_G\}_G$ can be regarded as a $0$-cochain of $\widehat{\mathcal{G}}$ with values in $(1/2)S^2 (\wedge^2 H)$.

\begin{thm} \label{thm:main}
For any fatgraph spine $G$ of $\Sigma$, we have $\xi_G \in S^2(\wedge^2 H)$.
The $0$-cochain $\xi\in C^0(\widehat{\mathcal{G}}; S^2(\wedge^2 H))$ cobounds the $1$-cocycle $s$ and is $\mathcal{M}_{g,*}$-equivariant in the sense that $\varphi \cdot \xi_G = \xi_{\varphi(G)}$ for any $G$ and $\varphi \in \mathcal{M}_{g,*}$.
\end{thm}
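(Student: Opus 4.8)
The plan is to establish the three assertions of Theorem \ref{thm:main}---integrality $\xi_G\in S^2(\wedge^2 H)$, the cobounding relation $\delta\xi=s$, and $\mathcal{M}_{g,*}$-equivariance---roughly in increasing order of difficulty. Equivariance is immediate from naturality: a mapping class $\varphi$ carries a trivalent vertex $v$ of $G$ to a trivalent vertex of $\varphi(G)$, and since $\varphi_*\mu(e)=\mu(\varphi(e))$ for every oriented edge $e$, one gets $\varphi\cdot\eta_v=\eta_{\varphi(v)}$ and hence $\varphi\cdot\xi_G=\xi_{\varphi(G)}$. For the cobounding relation, recall that the value of $\delta\xi$ on the oriented $1$-cell $W_e\colon G\to G'$ is $\xi_{G'}-\xi_G$, so the task is to show $\xi_{G'}-\xi_G=s(W_e)$.

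The central step is a purely local computation. A flip modifies $G$ only near $e$: the two endpoints of $e$ are replaced by the two endpoints of $e'$, while every other trivalent vertex and its homology marking are untouched, so all terms in $\xi_{G'}-\xi_G$ cancel except those from these four vertices. Writing $x=\mu(a)$, $y=\mu(b)$, $z=\mu(c)$, $w=\mu(d)$ in the notation of Figure \ref{fig:flip}, the relation at each endpoint (Example \ref{ex:hmarking}) pins down the relevant $\eta_v$: in $G$ the two endpoints of $e$ contribute $x\wedge y$ and $z\wedge w$, whereas in $G'$ the two endpoints of $e'$ contribute $y\wedge z$ and $x\wedge w$. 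Using $\eta_v\otimes\eta_v=\tfrac12\,\eta_v\cdot\eta_v$, the difference reduces to the identity
\[
(y\wedge z)\otimes(y\wedge z)+(x\wedge w)\otimes(x\wedge w)-(z\wedge w)\otimes(z\wedge w)-(x\wedge y)\otimes(x\wedge y)=(x\wedge z)\cdot(y\wedge w)
\]
in $\wedge^2 H\otimes\wedge^2 H$. Since $x+y+z+w=0$ by Example \ref{ex:hmarking}(2), I would substitute $w=-(x+y+z)$ and expand both sides in the basis $\{x\wedge y,\,x\wedge z,\,y\wedge z\}$; the diagonal terms cancel and both sides collapse to the same four-term expression, establishing $\xi_{G'}-\xi_G=s(W_e)$.

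It remains to prove integrality, which is where the genuine subtlety lies, since over $\mathbb{Z}$ the symmetric power $S^2$ (the image of the symmetrization map) is strictly smaller than the module $\mathrm{Sym}^2$ of swap-invariant tensors. Each $\eta_v\otimes\eta_v$ is symmetric, so $\xi_G\in\mathrm{Sym}^2(\wedge^2 H)$, but a single term $u\otimes u=\tfrac12\,u\cdot u$ need not lie in $S^2(\wedge^2 H)$. The cokernel of $S^2(\wedge^2 H)\hookrightarrow\mathrm{Sym}^2(\wedge^2 H)$ is naturally isomorphic to $(\wedge^2 H)\otimes\mathbb{Z}/2$, the isomorphism carrying the class of a diagonal tensor $u\otimes u$ to $u\bmod 2$; under it, $\xi_G=\sum_v\eta_v\otimes\eta_v$ maps to $\sum_v\eta_v \bmod 2$. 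Thus $\xi_G\in S^2(\wedge^2 H)$ if and only if $\sum_v\eta_v\in 2\wedge^2 H$, which is exactly the content of the symplectic form formula $\omega=\tfrac12\sum_v\eta_v$ with $\omega\in\wedge^2 H$ (Theorem \ref{thm:symplectic_form}), giving $\sum_v\eta_v=2\omega$. Alternatively, since the cobounding step already shows every difference $\xi_{G'}-\xi_G=s(W_e)$ lies in $S^2(\wedge^2 H)$ and any two trivalent spines are joined by a finite sequence of flips, integrality for all $G$ would follow from integrality for a single conveniently chosen base spine, checked directly. The main obstacle is therefore not the algebra of the cobounding identity, which is routine once set up, but the careful bookkeeping that reads off the four $\eta_v$ with correct orientations from the flip picture, together with the mod-$2$ analysis needed to promote $\xi_G$ from $\tfrac12\,S^2(\wedge^2 H)$ to $S^2(\wedge^2 H)$.
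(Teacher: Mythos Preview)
Your proposal is correct and follows essentially the same route as the paper. Equivariance is immediate; the cobounding computation is local and reduces to an identity in $\wedge^2 H\otimes\wedge^2 H$ under the constraint $x+y+z+w=0$. The paper carries this out by the difference-of-squares factoring
\[
2(\xi_{G'}-\xi_G)=(b\wedge c+a\wedge b)\cdot(b\wedge c-a\wedge b)+(d\wedge a+c\wedge d)\cdot(d\wedge a-c\wedge d)
\]
and then simplifies each factor using $a+b+c+d=0$, whereas you propose direct expansion after substituting $w=-(x+y+z)$; both are routine. For integrality the paper does exactly your alternative (b): it checks $\xi_{G_0}\in S^2(\wedge^2 H)$ for the explicit spine of Example~\ref{ex:G0}, where $\xi_{G_0}=\sum_i(a_i\wedge b_i)\cdot(a_i\wedge b_i)$, and then propagates via flips.

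Your first integrality argument---identifying the obstruction in $(\wedge^2 H)\otimes\mathbb{Z}/2$ with $\sum_v\eta_v\bmod 2$ and killing it via Theorem~\ref{thm:symplectic_form}---is a pleasant conceptual alternative, but be careful with the logical order: in the paper Theorem~\ref{thm:symplectic_form} is proved \emph{after} Theorem~\ref{thm:main}, and its proof uses the contracted cochain $\xi'=\mathrm{Cont}_{1,2}\circ\xi$ together with the fact that it cobounds $s'$. There is no genuine circularity, since that proof invokes only the cobounding and equivariance parts of Theorem~\ref{thm:main} and not integrality, but if you adopt this route you must reorganize the exposition accordingly.
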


\begin{proof}
That $\xi$ is $\mathcal{M}_{g,*}$-equivariant is clear from construction.

Next we prove that $\delta \xi = s$.
We need to show that $\xi_{G'} - \xi_G = s(W_e)$, where we use the notation in Figure \ref{fig:flip}.
For simplicity, we write $\mu(a)=a$, etc.
Note that there is a canonical bijection between the set of vertices of $G$ that are different from the endpoints of $e$ and the set defined in the same way for $G'$, and this bijection preserves the value of $\eta$.
Thus we have
\[
2 (\xi_{G'} - \xi_G) = (b\wedge c)^2 + (d\wedge a)^2 - (a\wedge b)^2 - (c\wedge d)^2,
\]
where $(b\wedge c)^2 = (b\wedge c)\cdot (b\wedge c)$, etc.
The right hand side is equal to
\[
(b\wedge c + a\wedge b) \cdot (b\wedge c - a\wedge b)
+ (d\wedge a + c\wedge d) \cdot (d\wedge a -c\wedge d).
\]
By Example \ref{ex:hmarking} (2), we have $a+b+c+d=0$.
Hence we have
\[
b\wedge c - a\wedge b = b\wedge (a+c)
= -b\wedge (b+d) = -b\wedge d,
\]
and similarly $d\wedge a - c\wedge d = b\wedge d$.
Therefore,
\begin{align*}
2 (\xi_{G'} - \xi_G) & =
(- b\wedge c - a \wedge b + d\wedge a + c\wedge d) \cdot (b\wedge d) \\
& = ( -a \wedge (b+d) -(b+d) \wedge c) \cdot (b\wedge d) \\
& = (a\wedge (a+c) + (a+c) \wedge c) \cdot (b\wedge d) \\
& = 2 (a\wedge c) \cdot (b\wedge d) \\
& = 2s(W_e).
\end{align*}
In the third line, we have used again the relation $a + b + c + d = 0$.
Since $S^2(\wedge^2 H)$ is torsion free, we obtain $\xi_{G'}-\xi_G = s(W_e)$.

Finally we prove that $\xi_G \in S^2(\wedge^2 H)$ for any $G$.
Consider the trivalent fatgraph spine $G_0$ in Example \ref{ex:G0} below.
Then, as we will see, it holds that $\xi_{G_0} \in S^2(\wedge^2 H)$.
Let $G$ be any trivalent fatgraph spine of $\Sigma$.
Take a sequence of flips
\[
G_0 \overset{W_1}{\longrightarrow} G_1
\overset{W_2}{\longrightarrow} \cdots
\overset{W_m}{\longrightarrow} G_m = G
\]
connecting $G_0$ to $G$.
Then from $\delta \xi = s$ we obtain $\xi_G = \xi_{G_0} + \sum_{j=1}^m s(W_j)$, which shows that $\xi_G \in S^2(\wedge^2 H)$ as well.
\end{proof}

\begin{example} \label{ex:G0}
Let $G_0$ be the trivalent fatgraph spine as in the left part of Figure \ref{fig:G0sympbasis} and fix a symplectic basis $\{ a_i,b_i\}_{i=1}^g$ for $H$ as shown in the right part of the same figure.
Then, we have $\mu(e_i) = -b_i$, $\mu(e_i') = a_i$, and $\mu(e''_i) = b_i$.
Thus $\eta_{v_i} = \mu(e_i) \wedge \mu(e_i') = a_i \wedge b_i$ and $\eta_{v'_i} = \mu(e''_i) \wedge \mu(\overline{e'_i}) = a_i \wedge b_i$.
The value of $\eta$ on the other vertices are zero, and hence
\[
\xi_{G_0} = \sum_{i=1}^g (a_i \wedge b_i) \cdot (a_i \wedge b_i).
\]
In particular, we see that $\xi_{G_0} \in S^2(\wedge^2 H)$.

\begin{figure}[ht]
  \centering
\input{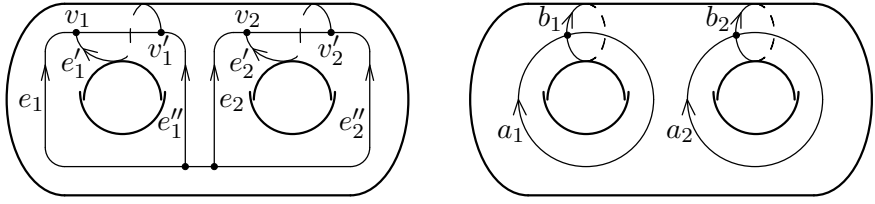}
\caption{The fatgraph spine $G_0$ and a symplectic basis ($g=2$).}
\label{fig:G0sympbasis}
\end{figure}
\end{example}

\begin{rem}
\begin{enumerate}
\item[(1)]
Since the module of $\mathcal{M}_{g,*}$-invariant tensors in $S^2(\wedge^2 H)$ is non-trivial (it is of rank $2$ if $g$ is at least $4$; see \cite[Lemma 4.1]{MorTaniguchi} for example), the $\mathcal{M}_{g,*}$-equivariant cobounding cochains for $s$ are not unique.
\item[(2)]
The cochain $\xi$ can be considered as a secondary invariant associated with the vanishing of the cohomology class $[s_G]$ which will be explained in the next subsection.
In \cite{K17}, another secondary invariant associated with the vanishing of a certain cohomology class in $H^1(\mathcal{M}_{g,*};H)$ was studied.
In that case, the uniqueness of the secondary invariant holds since there are no non-trivial $\mathcal{M}_{g,*}$-invariant elements in $H$.
\end{enumerate}
\end{rem}

\subsection{Cohomology vanishing}

Recall that the cocycle $s$ is $\mathcal{M}_{g,*}$-equivariant.
In this situation, given a trivalent fatgraph spine $G$ of $\Sigma$ one can construct a twisted $1$-cocycle
\[
s_G \colon \mathcal{M}_{g,*} \longrightarrow S^2(\wedge^2 H).
\]
Explicitly, for $\varphi \in \mathcal{M}_{g,*}$ we take a sequence of flips
\[
G = G_0 \overset{W_1}{\longrightarrow} G_1
\overset{W_2}{\longrightarrow} \cdots
\overset{W_m}{\longrightarrow} G_m = \varphi(G)
\]
which connects $G$ to $\varphi(G)$, and set
\[
s_G(\varphi) := \sum_{i=1}^m s(W_i).
\]
The twisted cohomology class
\[
[s_G] \in H^1(\mathcal{M}_{g,*}; S^2(\wedge^2 H))
\]
is independent of the choice of $G$.
A more detailed and general construction is explained in \cite[the proof of Theorem 5]{KPT} and in \cite[\S 2]{K17}.
This combinatorial construction of cocycles of $\mathcal{M}_{g,*}$ from $\mathcal{M}_{g,*}$-equivariant cocycles of $\widehat{\mathcal{G}}$ originally dates back to the work of Morita and Penner \cite{MP}.

In \cite[\S 3]{KPT}, it was shown that $[s_G]$ vanishes over the \emph{rational} coefficients.
As a consequence of Theorem \ref{thm:main}, we see that the vanishing of $[s_G]$ holds over the \emph{integral} coefficients.

\begin{cor}
$[s_G]= 0 \in H^1(\mathcal{M}_{g,*}; S^2(\wedge^2 H))$.
\end{cor}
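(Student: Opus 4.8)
The plan is to derive the vanishing of $[s_G]$ directly from the existence of the $\mathcal{M}_{g,*}$-equivariant cobounding cochain $\xi$ produced in Theorem \ref{thm:main}. The guiding idea is that the single element $\xi_G \in S^2(\wedge^2 H)$, viewed as a constant group $0$-cochain, should cobound the twisted $1$-cocycle $s_G$.

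First I would unwind the definition of $s_G$. For $\varphi \in \mathcal{M}_{g,*}$, choose a sequence of flips $G = G_0 \overset{W_1}{\longrightarrow} G_1 \overset{W_2}{\longrightarrow} \cdots \overset{W_m}{\longrightarrow} G_m = \varphi(G)$, so that $s_G(\varphi) = \sum_{i=1}^m s(W_i)$. Using the relation $\delta \xi = s$ from Theorem \ref{thm:main}, each summand satisfies $s(W_i) = \xi_{G_i} - \xi_{G_{i-1}}$, and the sum telescopes to $\xi_{G_m} - \xi_{G_0} = \xi_{\varphi(G)} - \xi_G$. (This also reconfirms, as already asserted in the construction of $s_G$, that the value does not depend on the chosen flip sequence.)

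Next I would invoke the equivariance $\varphi \cdot \xi_G = \xi_{\varphi(G)}$, again from Theorem \ref{thm:main}. Substituting, we obtain
\[
s_G(\varphi) = \varphi \cdot \xi_G - \xi_G
\]
for every $\varphi \in \mathcal{M}_{g,*}$. The right-hand side is precisely the group-cohomology coboundary of the $0$-cochain $\xi_G \in C^0(\mathcal{M}_{g,*}; S^2(\wedge^2 H)) = S^2(\wedge^2 H)$; that is, $s_G = \delta(\xi_G)$. Hence $s_G$ is a coboundary and $[s_G] = 0$.

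Since every step is an immediate consequence of Theorem \ref{thm:main}, I do not expect a genuine obstacle: the real work—constructing $\xi$, verifying $\delta \xi = s$ over the \emph{integers}, and checking equivariance—has already been carried out. The only point requiring (routine) care is the bookkeeping in the telescoping sum together with the observation that equivariance is exactly the property converting the telescoped difference $\xi_{\varphi(G)} - \xi_G$ into the standard form $\varphi \cdot \xi_G - \xi_G$ of a group $1$-coboundary.
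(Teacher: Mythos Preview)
Your argument is correct and follows essentially the same route as the paper's own proof: telescope the flip sum using $\delta\xi = s$ to obtain $s_G(\varphi) = \xi_{\varphi(G)} - \xi_G$, then invoke the $\mathcal{M}_{g,*}$-equivariance of $\xi$ to rewrite this as $\varphi\cdot\xi_G - \xi_G = (\delta\xi_G)(\varphi)$. The paper's proof is slightly terser but identical in content.
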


\begin{proof}
Let $\varphi \in \mathcal{M}_{g,*}$.
By construction and  by Theorem \ref{thm:main}, we have $s_G(\varphi) = \xi_{\varphi(G)} - \xi_G$.
Since the $0$-cochain $\xi$ is $\mathcal{M}_{g,*}$-equivariant, the right hand side is equal to $\varphi \cdot \xi_G - \xi_G$.
Hence $s_G = \delta \xi_G$ and $[s_G]=0$.
\end{proof}

\subsection{Contractions of Penner's cocycle}

Using the intersection form on $H$, we introduce the contraction maps
\[
{\rm Cont}_{1,2}\colon H^{\otimes 4} \longrightarrow H^{\otimes 2},
\quad
x\otimes y \otimes z \otimes w \mapsto
(x\cdot y)\, z\otimes w
\]
and
\[
{\rm Cont}_{1,3}\colon H^{\otimes 4} \longrightarrow H^{\otimes 2},
\quad
x\otimes y \otimes z \otimes w \mapsto
(x\cdot z)\, y\otimes w.
\]
Their restrictions to $S^2(\wedge^2 H) \subset H^{\otimes 4}$ take values in $\wedge^2 H$.
We consider $1$-cocycles
\[
s':= {\rm Cont}_{1,2}\circ s
\quad
\text{and}
\quad
s'':= {\rm Cont}_{1,3} \circ s.
\]
Explicitly, their values on the flip $W_e$ are given as follows:
\begin{align*}
s'(W_e) &= 2 \left( (a\cdot c)\, b\wedge d + (b\cdot d)\, a \wedge c \right), \\
s''(W_e) &=
(a\cdot b)\, c\wedge d + (a\cdot d)\, b\wedge c
+ (c\cdot d)\, a\wedge b + (b\cdot c)\, a\wedge d.
\end{align*}
Here and in the rest of this section, we simply write $\mu(a)=a$, etc.

\begin{prop} \label{prop:s2s}
We have $s' = 2 s'' \in Z^1(\widehat{\mathcal{G}}; \wedge^2 H)$.
\end{prop}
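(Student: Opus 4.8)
The plan is to prove the equality flip by flip. Both $s'$ and $s''$ are $1$-cochains of $\widehat{\mathcal{G}}$, hence determined by their values on oriented $1$-cells, that is, on flips; and the displayed formulas show that $s'(W_e)$ and $s''(W_e)$ depend only on the four homology markings $a,b,c,d$ of the edges adjacent to $e$ in Figure~\ref{fig:flip}. So it suffices to establish the pointwise equality $s'(W_e) = 2s''(W_e)$ in $\wedge^2 H$ for an arbitrary flip, i.e.\ the algebraic identity
\[
(a\cdot c)\, b\wedge d + (b\cdot d)\, a\wedge c
= (a\cdot b)\, c\wedge d + (a\cdot d)\, b\wedge c + (b\cdot c)\, a\wedge d + (c\cdot d)\, a\wedge b.
\]

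The only relation available among $a,b,c,d$ is $a+b+c+d=0$, from Example~\ref{ex:hmarking}(2), and I would emphasize at the outset that it is genuinely indispensable: for a generic quadruple the two sides differ. For instance, taking a symplectic quadruple $a=e_1$, $b=e_2$, $c=f_1$, $d=f_2$ (so that $a+b+c+d\neq 0$), the left side equals $e_1\wedge f_1 + e_2\wedge f_2$ while the right side vanishes. The first step is therefore to use the relation to eliminate $d=-(a+b+c)$, and then to express every coefficient and bivector in terms of the three intersection numbers $a\cdot b$, $a\cdot c$, $b\cdot c$ and the three bivectors $a\wedge b$, $a\wedge c$, $b\wedge c$. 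Using skew-symmetry of both pairings (so that $x\cdot x=0$ and $x\wedge x=0$) one records
\[
a\cdot d = -(a\cdot b)-(a\cdot c),\quad b\cdot d = (a\cdot b)-(b\cdot c),\quad c\cdot d = (a\cdot c)+(b\cdot c),
\]
\[
a\wedge d = -(a\wedge b)-(a\wedge c),\quad b\wedge d = (a\wedge b)-(b\wedge c),\quad c\wedge d = (a\wedge c)+(b\wedge c).
\]

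Substituting these into the two sides of the displayed identity and expanding, I expect each side to collapse to the same combination, namely $(a\cdot c)(a\wedge b) + (a\cdot b)(a\wedge c) - (a\cdot c)(b\wedge c) - (b\cdot c)(a\wedge c)$, which finishes the proof. I anticipate no conceptual obstacle here: once $d$ is eliminated this is a finite linear-algebra computation. The only point requiring genuine care is the sign bookkeeping, since both the intersection form and the wedge are antisymmetric and several cross terms (for example the coefficients of $b\wedge c$) cancel only after the substitution $d=-(a+b+c)$ is carried through consistently; this cancellation is precisely what makes the constraint necessary.
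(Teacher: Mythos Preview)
Your proof is correct, and it is genuinely different from the paper's. You argue purely algebraically: after eliminating $d=-(a+b+c)$, both sides reduce to $(a\cdot c)(a\wedge b)+(a\cdot b)(a\wedge c)-(a\cdot c)(b\wedge c)-(b\cdot c)(a\wedge c)$, as you claim (the computation checks out). The paper instead performs a case analysis on the cyclic ordering $\prec$ of the oriented edges $a,b,c,d$ around the boundary polygon: up to the evident symmetries there are six cases, and in each one the authors read off the actual integers $(a\cdot c)$, $(b\cdot d)$, $(a\cdot b)$, etc.\ from the geometry of the dual loops and then verify the identity numerically. Your route is shorter and shows that the statement is a formal consequence of $a+b+c+d=0$ alone, with no appeal to the embedding; the paper's route, while more laborious, yields the explicit intersection numbers in each configuration, and those same case computations are reused verbatim in the proof of the next lemma ($\delta\widetilde{\xi}=s'$), where the type of each vertex also depends on the cyclic ordering. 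So your argument is cleaner for this proposition in isolation, but the paper's case analysis amortizes across two results.
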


\begin{proof}
We work with Figure \ref{fig:flip} and prove the equality $s'(W_e) = 2s''(W_e)$.
We need to take into account possible cyclic orderings among oriented edges $a,b,c,d$.
By changing the role of $a$ and $c$ and the role of $G$ and $G'$, it is sufficient to consider the following six cases:

\begin{center}
\begin{tabular}{rrrr}
(i) & $a \prec b \prec c \prec d \prec a$;
& \quad (ii) & $a\prec b \prec d \prec c \prec a$; \\
(iii) & $a\prec c \prec b \prec d \prec a$;
& \quad (iv) & $a\prec c \prec d \prec b \prec a$; \\
(v) & $a\prec d \prec b \prec c \prec a$;
& \quad (vi) & $a\prec d \prec c \prec b \prec a$.
\end{tabular}
\end{center}
We only consider the cases (i) and (ii), the other cases being similar.

Case (i).
From Figure \ref{fig:twopatterns}, we see that all the intersection numbers $(a\cdot c), (b\cdot d), (a\cdot b), (a\cdot d), (c\cdot d)$, and $(b\cdot c)$ are zero, and thus $s'(W_e) = s''(W_e)=0$.

Case (ii).
From Figure \ref{fig:twopatterns}, we have
$
(a\cdot c) = 1, (b\cdot d) = 0,
(a\cdot b) = 0, (a\cdot d) = -1,
(c\cdot d) = +1, (b\cdot c)=0
$.
Therefore, $s'(W_e) = 2 b\wedge d$, and
\[
s''(W_e)  = -b\wedge c + a \wedge b
= -b \wedge (a+c) 
= b\wedge (b+d) 
= b\wedge d.
\]
Thus $s'(W_e) = 2 s''(W_e)$, as required.
\end{proof}

\begin{figure}[ht]
  \centering
\input{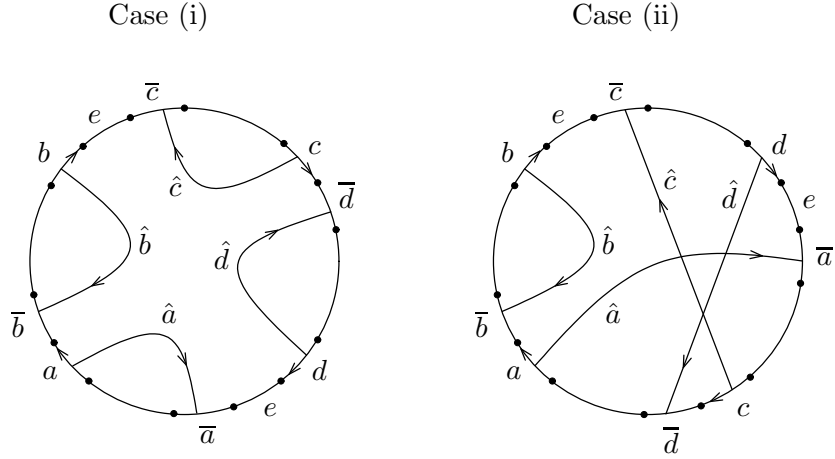}
\caption{Case (i) and Case (ii).}
\label{fig:twopatterns}
\end{figure}

\subsection{Cobounding cochains for $s'$} \label{subsec:cocos'}

By Theorem \ref{thm:main},
\[
\xi':={\rm Cont}_{1,2}\circ \xi \in C^0(\widehat{\mathcal{G}}; \wedge^2 H)
\]
is an $\mathcal{M}_{g,*}$-equivariant $0$-cochain which cobounds the $1$-cocycle $s'$.
Let $G$ be a trivalent fatgraph spine of $\Sigma$.
To describe $\xi'_G$ explicitly, we recall the following definition from \cite[\S 3]{K17}.

\begin{dfn}
Let $v$ be a trivalent vertex of $G$ and use the notation in Figure \ref{fig:abc}.
The vertex $v$ is called of \emph{type 1} if $a_v \prec b_v \prec c_v \prec a_v$, and of \emph{type 2} if $a_v \prec c_v \prec b_v \prec a_v$.
\end{dfn}

Observe the following:
\[
\begin{cases}
(a_v\cdot b_v) = (b_v \cdot c_v) = (c_v \cdot a_v)=0 &
\text{if $v$ is of type 1;} \\
(a_v \cdot b_v) = (b_v \cdot c_v) = (c_v \cdot a_v) = 1 &
\text{if $v$ is of type 2.}
\end{cases}
\]
By \eqref{ex:2xy} and $\xi_G = \sum_v \eta_v \otimes \eta_v = \sum_v (a_v \wedge b_v) \otimes (a_v \wedge b_v)$, we have
\begin{equation}
\label{eq:xi1}
\xi'_G = 2 \sum_v (a_v\cdot b_v)\, a_v \wedge b_v
=2 \sum_{v:\, \text{type 2}} a_v \wedge b_v.
\end{equation}

The $\mathcal{M}_{g,*}$-equivariant cobounding cochains for the $1$-cocycle $s'$ are not unique.
Let us consider another  $\mathcal{M}_{g,*}$-equivariant cochain $\widetilde{\xi} = \{ \widetilde{\xi}_G\}_G \in C^0(\widehat{\mathcal{G}}; \wedge^2 H) $ defined by
\begin{equation}
\label{eq:xi2}
\widetilde{\xi}_G :=
-\sum_{v:\, \text{type 1}} a_v \wedge b_v
+\sum_{v:\, \text{type 2}} a_v \wedge b_v.
\end{equation}

\begin{lem}
$\delta \widetilde{\xi} = s'$.
\end{lem}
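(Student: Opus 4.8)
The plan is to avoid repeating the six-case analysis of Proposition \ref{prop:s2s} and instead reduce the statement to the identity $\delta\xi' = s'$ recorded at the start of \S\ref{subsec:cocos'}. The first step is to express $\widetilde{\xi}$ in terms of $\xi'$ and the sum of the $\eta_v$. Every trivalent vertex $v$ is either of type 1 or of type 2, and by definition $\eta_v = a_v\wedge b_v$. Hence, comparing \eqref{eq:xi1} and \eqref{eq:xi2} and splitting the sum $\sum_v\eta_v$ over all trivalent vertices according to type, I would obtain
\[
\widetilde{\xi}_G
= \sum_{v:\,\text{type 2}} a_v\wedge b_v - \sum_{v:\,\text{type 1}} a_v\wedge b_v
= 2\sum_{v:\,\text{type 2}} a_v\wedge b_v - \sum_v \eta_v
= \xi'_G - \sum_v \eta_v .
\]

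Since $\delta\xi' = s'$, it then suffices to prove that the $0$-cochain $G\mapsto \sum_v\eta_v$ is a cocycle, i.e.\ that $\sum_v\eta_v$ is invariant under every flip; this is the heart of the argument, although it is essentially the computation already performed in the proof of Theorem \ref{thm:main}. For a flip $G\overset{W_e}{\longrightarrow}G'$, the canonical bijection matches all vertices away from the endpoints of $e$ while preserving $\eta$, so in the notation of Figure \ref{fig:flip} the two endpoint vertices of $e$ in $G$ contribute $a\wedge b$ and $c\wedge d$, and the two endpoint vertices of $e'$ in $G'$ contribute $b\wedge c$ and $d\wedge a$. Using $a+b+c+d=0$ from Example \ref{ex:hmarking}(2) exactly as before, namely $b\wedge c - a\wedge b = b\wedge(a+c) = -b\wedge(b+d) = -b\wedge d$ and $d\wedge a - c\wedge d = b\wedge d$, I would conclude
\[
\sum_{v'}\eta_{v'} - \sum_v \eta_v = (b\wedge c + d\wedge a) - (a\wedge b + c\wedge d) = 0,
\]
so that $\delta\big(\sum_v\eta_v\big)=0$ and therefore $\delta\widetilde{\xi} = \delta\xi' - \delta\big(\sum_v\eta_v\big) = s'$.

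I expect no genuine obstacle here: the only delicate point is the bookkeeping of which $\eta$-values are attached to the four vertices involved in the flip, and that matching is already fixed by the conventions of the proof of Theorem \ref{thm:main}, so no new case analysis is required. I would also remark that the invariance $\delta\big(\sum_v\eta_v\big)=0$ is precisely the statement that $\tfrac12\sum_v\eta_v$ is independent of $G$, which is the symplectic form $\omega$ of Theorem \ref{thm:symplectic_form}; one may alternatively invoke that theorem to finish, but the direct flip computation above keeps the argument self-contained.
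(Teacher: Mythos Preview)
Your argument is correct and takes a genuinely different route from the paper. The paper proves $\widetilde{\xi}_{G'}-\widetilde{\xi}_G=s'(W_e)$ by the same six-case analysis used in Proposition~\ref{prop:s2s}: for each possible cyclic ordering of $a,b,c,d$ with respect to $\prec$ one determines which of the four vertices in Figure~\ref{fig:flip} are of type~1 or type~2, writes out $\widetilde{\xi}_{G'}-\widetilde{\xi}_G$ with the appropriate signs, and compares with the already-computed value of $s'(W_e)$. Your approach bypasses this case analysis entirely by observing the identity $\widetilde{\xi}_G=\xi'_G-\sum_v\eta_v$ and reducing the lemma to the flip-invariance of $\sum_v\eta_v$, which follows from the single algebraic relation $a+b+c+d=0$ with no reference to the ordering $\prec$ or to vertex types. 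This is cleaner and more conceptual; in fact it isolates exactly the statement that $\zeta_G:=\xi'_G-\widetilde{\xi}_G=\sum_v\eta_v$ is a $0$-cocycle, which is the starting point of the paper's proof of Theorem~\ref{thm:symplectic_form}. One caution on your final remark: in the paper's logical order Theorem~\ref{thm:symplectic_form} \emph{uses} the present lemma (via the fact that both $\xi'$ and $\widetilde{\xi}$ cobound $s'$), so invoking it here would be circular; you were right to keep the direct flip computation.
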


\begin{proof}
We work with Figure \ref{fig:flip} and prove $\widetilde{\xi}_{G'} - \widetilde{\xi}_G = s'(W_e)$.
We consider the six cases in the proof of Proposition \ref{prop:s2s}.
We only treat the first two cases, the other cases being similar.

Case (i).
All the trivalent vertices in Figure \ref{fig:flip} are of type 1, and hence
\[
\widetilde{\xi}_{G'} - \widetilde{\xi}_G
= -b\wedge c-d\wedge a -(-a\wedge b -c\wedge d)
=(a+c) \wedge (b+d) =0.
\]
Since $s'(W_e) =0$ as shown in the proof of Proposition \ref{prop:s2s}, we obtain $\widetilde{\xi}_{G'} - \widetilde{\xi}_G = s'(W_e)$.

Case (ii). Suppose $a\prec b \prec d \prec c \prec a$.
Let $v_1$ be the vertex of $G$ adjacent to $a,b,e$ and $v_2$ the one adjacent to $c,d,e$.
Similarly, let $v'_1$ be the vertex of $G'$ adjacent to $b,c,e'$ and $v'_2$ the one adjacent to $a,d,e'$.
Then $v_1$ and $v'_1$ are of type 1, and $v_2$ and $v'_2$ are of type 2.
Thus we compute
\begin{align*}
\widetilde{\xi}_{G'} - \widetilde{\xi}_G
&= -b\wedge c + d\wedge a - (-a\wedge b + c\wedge d) \\
&= -b \wedge (a+c) -(a+c) \wedge d \\
&= b\wedge (b+d) + (b+d) \wedge d \\
&= 2 b\wedge d.
\end{align*}
As was shown in the proof of Proposition \ref{prop:s2s}, we have $s'(W_e) = 2b\wedge d$.
This completes the proof.
\end{proof}

\subsection{A combinatorial formula for the symplectic form}

Let $\omega \in \wedge^2 H$ be the \emph{symplectic form} on $H=H_1(\Sigma;\mathbb{Z})$.
This is the element corresponding to the identity $1_H \in {\rm Hom}_{\mathbb{Z}}(H,H)$ through the isomorphism
\[
H \otimes H \cong {\rm Hom}_{\mathbb{Z}}(H,H),
\quad
x\otimes y \mapsto ( z \mapsto (x\cdot z) y)
\]
defined by the intersection form, where $x,y,z \in H$.
Explicitly, if $\{ a_i,b_i\}_{i=1}^g$ is a symplectic basis as in Example \ref{ex:G0}, then
\[
\omega = \sum_{i=1}^g a_i \wedge b_i.
\]
It is a generator of the module of $\mathcal{M}_{g,*}$-invariants in $\wedge^2 H$.

\begin{thm} \label{thm:symplectic_form}
Let $G$ be a trivalent fatgraph spine of $\Sigma$ and keep the notations above.
Then we have
\[
\omega = \frac{1}{2} \sum_v \eta_v = \frac{1}{2} \sum_v a_v \wedge b_v,
\]
where the sum is taken over all trivalent vertices of $G$.
\end{thm}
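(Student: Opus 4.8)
The plan is to follow exactly the strategy used in the proof of Theorem \ref{thm:main}. Set
\[
\omega_G := \frac{1}{2} \sum_v \eta_v \in \textstyle{\frac{1}{2}} \wedge^2 H,
\]
where the sum runs over the trivalent vertices of $G$. First I would show that $\omega_G$ is invariant under flips, i.e. $\omega_{G'} = \omega_G$ in the notation of Figure \ref{fig:flip}. Since any two trivalent fatgraph spines of $\Sigma$ are connected by a finite sequence of flips, flip invariance already forces $\omega_G$ to be independent of $G$. It then suffices to evaluate $\omega_G$ on a single convenient spine and check that the value is $\omega$.

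For the flip invariance I would reuse the bookkeeping already carried out in the proof of Theorem \ref{thm:main}: under the flip only the two vertices at the endpoints of $e$ change their $\eta$-value, and their contributions to $\sum_v \eta_v$ are $a\wedge b$ and $c\wedge d$ for $G$, and $b\wedge c$ and $d\wedge a$ for $G'$, where I write $a = \mu(a)$, etc. Hence the change in $\sum_v \eta_v$ is
\[
(b\wedge c + d\wedge a) - (a\wedge b + c\wedge d) = (b+d)\wedge(a+c) = -(b+d)\wedge(b+d) = 0,
\]
the middle equality being an easy antisymmetry rearrangement and the last one using $a+c = -(b+d)$, which follows from the relation $a+b+c+d=0$ of Example \ref{ex:hmarking} (2). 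This gives $\omega_{G'} = \omega_G$.

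For the base case I would take the spine $G_0$ of Example \ref{ex:G0}, where $\eta_{v_i} = \eta_{v'_i} = a_i \wedge b_i$ and $\eta$ vanishes on all other vertices, so that $\sum_v \eta_v = 2\sum_{i=1}^g a_i \wedge b_i = 2\omega$ and therefore $\omega_{G_0} = \omega$. Combining this with flip invariance yields $\omega_G = \omega$ for every trivalent fatgraph spine $G$; in particular $\omega_G$ lies in $\wedge^2 H$ rather than merely in $\frac{1}{2}\wedge^2 H$. The only delicate point is the orientation bookkeeping needed to pin down the four $\eta$-values at the vertices adjacent to $e$ and $e'$, but this is precisely the identification already established in the proof of Theorem \ref{thm:main}, so the substantive computation here reduces to the short antisymmetry argument above and I expect no further obstacle.
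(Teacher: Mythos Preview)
Your proof is correct, and it takes a genuinely different (and more direct) route than the paper's argument. The paper does not prove flip invariance of $\sum_v \eta_v$ by a direct calculation; instead it observes that the two cochains $\xi' = {\rm Cont}_{1,2}\circ \xi$ and $\widetilde{\xi}$ defined in \S\ref{subsec:cocos'} both cobound the same $1$-cocycle $s'$, so their difference $\zeta_G = \xi'_G - \widetilde{\xi}_G = \sum_v a_v \wedge b_v$ is a $0$-cocycle and hence flip invariant. It then argues that the common value $\zeta_0$ is $\mathcal{M}_{g,*}$-invariant, hence equal to $m\omega$ for some integer $m$, and determines $m=2$ by applying the intersection form and invoking the count (from \cite{K17}) that exactly $2g$ vertices are of type~2. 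Your argument bypasses all of this machinery: the flip invariance comes from the one-line identity $(b+d)\wedge(a+c)=0$, and the identification with $\omega$ is read off directly from Example~\ref{ex:G0}. What the paper's approach buys is that it ties the formula organically to the cocycle $s$ and its contractions, which is the thematic thread of the paper; what your approach buys is a self-contained proof that needs neither Proposition~\ref{prop:s2s}, nor the auxiliary cochain $\widetilde{\xi}$, nor the external vertex-type count.
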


\begin{proof}
As we have seen in \S \ref{subsec:cocos'}, $\xi'$ and $\widetilde{\xi}$ are cobounding cochains for the same $1$-cocycle $s'$.
Thus their difference is a $0$-cocycle:
\[
\zeta:= \xi' - \widetilde{\xi} \in Z^0(\widehat{\mathcal{G}}; \wedge^2 H).
\]
This means that the value $\zeta_G$ is invariant under flips and hence is independent of $G$.
Let us denote this value by $\zeta_0 \in \wedge^2 H$.
Then $\zeta_0$ is $\mathcal{M}_{g,*}$-invariant since $\zeta$ is $\mathcal{M}_{g,*}$-equivariant.
Thus we can write $\zeta_0 = m \omega$ for some $m\in \mathbb{Z}$.

To determine $m$, we use the intersection form $H^{\otimes 2} \to \mathbb{Z}$ restricted to $\wedge^2 H$.
On the one hand, from \eqref{eq:xi1} and \eqref{eq:xi2}
we see that for any $G$
\[
\zeta_0 = \xi'_G - \widetilde{\xi}_G = \sum_v a_v \wedge b_v.
\]
As shown in \cite[Proposition 3.3]{K17}, there are $2g$ vertices of type 2.
(In \cite{K17} one considers fatgraph spines \emph{with tail}, but notice that the trivalent vertex adjacent to the tail is of type 1.)
Therefore, the value of the intersection form for $\zeta_0$ is $2\sum_v (a_v\cdot b_v) = 2\, \sharp \{ \text{vertices of type $2$} \} = 4g$.
On the other hand, the value of the intersection form for $\omega$ is $2g$.
Therefore, we conclude that $m=2$ and $\zeta_0 = 2\omega$.
This completes the proof.
\end{proof}

\begin{rem}
\begin{enumerate}
\item
Our formula for $\omega$ works for a fatgraph spine with tail as well.
We note that the vertex next to the tail does not contribute to the sum $(1/2)\sum_v \eta_v$ since the homology marking of the tail is trivial.
\item
In \cite{Ma}, Massuyeau constructed a 2-chain $Z_G$ in the normalized bar complex of $\pi:= \pi_1(\Sigma_{g,1})$, where $\Sigma_{g,1}$ is a once bordered surface of genus $g$ and $G\subset \Sigma_{g,1}$ is a fatgraph spine with tail.
By the canonical homomorphism $\pi \to H_1(\Sigma_{g,1}) \cong H$, the $2$-chain $Z_G$ is mapped to a normalized $2$-cocycle $Z'_G$ of $H$.
Massuyeau points out that $Z'_G$ has a similar expression to our bivector $(1/2)\sum_v \eta_v$ and its homology class in $H_2(H) \cong \Lambda^2 H$ coincides with $\omega$.
\end{enumerate}
\end{rem}

\end{document}